\title{Odd-distance and right-equidistant sets \\ in the maximum and Manhattan metrics}
\author{
  Alexander Golovanov\thanks{MIPT, Moscow, Russia. Email:~\href{mailto:Golovanov@phystech.edu}{\tt Golovanov@phystech.edu}. },
  Andrey Kupavskii\thanks{MIPT, Moscow, Russia and G-SCOP, Universit\'e Grenoble-Alpes, CNRS, France. Research supported by the \href{https://rscf.ru/project/21-71-10092/}{RSF grant N 21-71-10092}. Email:~\href{mailto:kupavskii@ya.ru}{\tt kupavskii@ya.ru}.},
  Arsenii Sagdeev\thanks{MIPT, Moscow, Russia and Alfréd Rényi Institute of Mathematics, Budapest, Hungary. Supported in part by ERC Advanced Grant `GeoScape'. The author is also a winner of Young Russian Mathematics Contest and would like to thank its sponsors and jury. Email:~\href{mailto:sagdeevarsenii@gmail.com}{\tt sagdeevarsenii@gmail.com}.}
}
\date{June 2021}
\newtheorem{theorem}{Theorem}
\newtheorem{lemma}{Lemma}
\newtheorem{claim}{Claim}
\newtheorem{question}{Question}
\theoremstyle{remark}
\newtheorem*{remark}{Remark}
\newcommand{\p}{{\mathcal P}}
\newcommand{\R}{{\mathbb R}}
\newcommand{\N}{{\mathbb N}}
\newcommand{\X}{{\mathbb M}}
\newcommand{\x}{{\mathbf x}}
\newcommand{\y}{{\mathbf y}}
\newcommand{\z}{{\mathbf z}}
\newcommand{\V}{{\boldsymbol v}}
\newcommand{\e}{{\mathbf e}}
\begin{document}
	
\maketitle

\begin{abstract}
	We solve two related extremal-geometric questions in the $n-$dimensio-nal space $\mathbb{R}^n_{\infty}$ equipped with the maximum metric. First, we prove that the maximum size of a {\it right-equidistant} sequence of points in $\mathbb{R}^n_{\infty}$ equals $2^{n+1}-1$. A sequence is {\it right-equidistant} if each of the points is at the same distance from all the succeeding points. Second, we prove that the maximum number of points in $\mathbb{R}^n_{\infty}$ with pairwise odd distances equals $2^n$. We also obtain partial results for both questions in the $n-$dimensional space $\mathbb{R}^n_1$ with the Manhattan distance. 
\end{abstract}

\section{Introduction}

Given a metric space $\X$, its {\it equilateral dimension} $e(\X)$ is the maximum number of its points with pairwise equal distances. It was most extensively studied for the $n-$dimensional $\ell_p-$spaces $\mathbb{R}^n_{p}$. Recall that the $\ell_p-$distance between two points $\x, \y \in \R^n$ is given by
\begin{equation*}
	\|\x-\y\|_p = \big(|x_1-y_1|^p+\ldots+|x_n-y_n|^p\big)^{1/p}
\end{equation*}
for any real $p\ge 1$, and in case $p = \infty$ by
\begin{equation*}
	\|\x-\y\|_{\infty} = \max_i |x_i-y_i|.
\end{equation*}
It is not hard to check (see e.g. \cite{Pet}) that in the Euclidean case we have $e(\R_2^n) = n+1$, while in the max-norm case we have $e(\R_{\infty}^n) = 2^n$. The lower bounds here are given by the vertex sets of a unit simplex and a hypercube, respectively. In contrast, much less is known about the behavior of $e(\R_p^n)$ for $p \neq 2, \infty$. For instance, for the Manhattan distance Alon and Pudl\'ak \cite{AlPud} showed that $e(\R_1^n) < cn\log n$ for some positive constant $c$, while the best lower bound $e(\R_1^n) \ge 2n$ comes from considering the vertices of the standard cross-polytope. Kusner conjectured \cite{Guy} that the lower bound is tight. This conjecture was verified only for $n=3$ (Bandelt, Chepoi, and Laurent \cite{BCL}) and $n=4$ (Koolen, Laurent, and Schrijver \cite{KMS}). For the state of the art for other values of $p$ see \cite{AlPud, Smyth, Swan04, SwanVil}.

In the present paper, we deal with two related problems. The first problem we consider deals with a notion of {\it right-equidistant} sequences. We call a sequence $\x^{(1)},\dots,\x^{(m)}$ of distinct points in $\R_p^n$ {\it right-equidistant,} if $\|\x^{(j_1)}-\x^{(i)}\|_p = \|\x^{(j_2)}-\x^{(i)}\|_p$ for all $1 \le i < j_1 \le j_2 \le m$. Informally,  each point of the sequence is at the same distance from all the succeeding points.

Polyanskii \cite{pol17} proved the following general theorem, improving upon the previous known bound by Nasz\'{o}di, Pach and Swanepoel \cite[Corollary 14]{nps17}.

\begin{theorem} \label{T_nps}
    In any $n$-dimensional normed space the size of a right-equidistant sequence does not exceed $O(3^nn)$.
\end{theorem}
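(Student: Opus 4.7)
The plan is to combine a covering argument on the unit sphere with an inductive control on the length of a right-equidistant sequence concentrated in a small ball.

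First, by translating and rescaling I would assume $\x^{(1)}$ is the origin and $\|\x^{(1)} - \x^{(2)}\| = 1$. The right-equidistant property then forces $\|\x^{(1)} - \x^{(i)}\| = 1$ for every $i \ge 2$, so $\x^{(2)}, \ldots, \x^{(m)}$ all lie on the unit sphere $S$ of the ambient $n$-dimensional normed space $X$. Note also that the tail $(\x^{(2)}, \ldots, \x^{(m)})$ is itself right-equidistant. By the standard volumetric covering estimate in normed spaces, $S$ admits an $\varepsilon$-net of cardinality at most $(1 + 2/\varepsilon)^n$; taking $\varepsilon = 1$ produces a $1$-net of size at most $3^n$. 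Assigning each of the points $\x^{(2)}, \ldots, \x^{(m)}$ to a nearest net point partitions them into at most $3^n$ classes, each contained in a ball of radius $1$.

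The remaining step is to bound the length $g(n)$ of a right-equidistant subsequence all of whose points lie in a single ball of radius $1$. If one can establish $g(n) = O(n)$, then pigeonhole gives $m - 1 \le 3^n \cdot g(n) = O(3^n n)$, matching the theorem. I would attack $g(n) = O(n)$ by induction on $n$: within a single class let $\x^{(i)}$ be the point whose right-radius $r_i := \|\x^{(i)} - \x^{(i+1)}\|$ is largest, re-normalize so $\x^{(i)}$ is at the origin and $r_i = 1$, and note that the subsequent points of the class lie in the intersection of the unit sphere of $X$ with a ball of radius at most $1$. Such an intersection is contained in a thin slab, which after a suitable affine reduction can be embedded into a sphere of an $(n-1)$-dimensional normed space, enabling the inductive step.

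The main obstacle is precisely this dimension-reduction argument. In Euclidean space the reduction is a straightforward orthogonal projection, but for a general norm the intersection of two spheres is merely a convex set of geometric codimension one, not a priori linearly isomorphic to an $(n-1)$-sphere. One must therefore appeal to the convex-geometric structure of $X$---perhaps via a John's theorem comparison to $\ell_2^n$, or via a careful choice of the extremal point $\x^{(i)}$ so that the slab lies inside a genuine $(n-1)$-face of an appropriate dilate of the unit ball. Making this rigorous while keeping the loss of constants independent of $n$ is the crux of the argument.
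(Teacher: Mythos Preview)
First, note that the paper does not supply its own proof of this statement: Theorem~\ref{T_nps} is quoted from Polyanskii~\cite{pol17} (with an alternative argument in~\cite{ns17_2}), so there is nothing in the paper to compare against. I will therefore comment on your sketch directly.

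There is a structural flaw before you reach the obstacle you flag. As you define it, $g(n)$ is the maximum length of a right-equidistant sequence contained in a ball of radius~$1$. That condition is scale-invariant: any right-equidistant sequence whatsoever can be rescaled into the unit ball, so $g(n)$ coincides with the very maximum $m$ you are trying to bound, and the inequality $m-1\le 3^n g(n)$ is vacuous. For the reduction to have content you must tie the scale of the subsequence to the radius of the ambient ball --- for instance by also recording that every point of the class lies on the unit sphere around $\x^{(1)}$ --- but you do not carry this extra constraint into the definition of $g(n)$, and once you renormalise around a new centre $\x^{(i)}$ that constraint is lost anyway.

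Even granting a charitable reformulation, your induction scheme for $g(n)=O(n)$ has a second gap: choosing the index~$i$ of maximal right-radius and passing to the tail $\x^{(i+1)},\ldots$ simply discards all points with indices below~$i$, and nothing bounds their number. The dimension-reduction step you single out is a third, independent problem: a John-ellipsoid comparison distorts distances by a factor~$\sqrt{n}$, which already destroys an $O(n)$ target, and the intersection of two unit spheres in a general norm need not sit inside any affine hyperplane. The published proofs in~\cite{pol17,ns17_2} avoid recursion on the dimension altogether, working instead through the combinatorics of pairwise-intersecting homothets of the unit ball; to complete your route you would need a genuinely new idea at each of the three points above.
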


Later, Nasz\'{o}di and Swanepoel \cite{ns17_2} presented an alternative proof of this fact. One motivation for this result is that it implies an upper bound on the cardinality of a set with only $k$ distinct distances between pairs of points (a \emph{$k$-distance} set).

It is not hard to see that the maximum size of a right-equidistant sequence in the Euclidean space $\R_{2}^n$ is equal to $n+2$. Indeed, one can obtain the upper bound by induction on $n$.  As for the lower bound, consider the center of an $n-$dimensional regular simplex along with its $n+1$ vertices\footnote{This extremal configuration is not unique. Actually, there is a continuum of non-isometric extremal configurations. For instance, another natural construction comes from considering a vertex set of an $n-$dimensional regular simplex with one additional point obtained by reflecting its arbitrary vertex along the opposite facet.}. No other partial results are known for non-Euclidean $\ell_p-$spaces $\R_p^n$.

In the present paper, we obtain bounds for the  right-equidistant sequences in the spaces with the maximum metric $\ell_\infty$ and the Manhattan distance $\ell_1$.

\begin{theorem} \label{T_right_infty}
	The maximum size of a right-equidistant sequence of points in~$\R_{\infty}^n$ equals  $2^{n+1}-1$ for all $n \in \N$.
\end{theorem}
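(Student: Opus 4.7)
The plan is to establish $f(n) = 2^{n+1}-1$ (where $f(n)$ denotes the maximum size of a right-equidistant sequence in $\R^n_\infty$) by induction on $n$, using the base case $f(0)=1$ and the inductive step $f(n) = 2f(n-1)+1$. Both the lower and upper bounds are obtained recursively, with the hard direction being the upper bound.

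For the lower bound, my plan is to construct $S_n$ of size $2^{n+1}-1$ by combining two blocks. Given a right-equidistant sequence $S_{n-1}=(p_1,\dots,p_{2^n-1})$ in $\R^{n-1}_\infty$ with diameter $D$, I would lift $S_{n-1}$ to the hyperplane $\{x_n=-H\}$ for a sufficiently large $H$, obtaining a \emph{lower block} of $2^n-1$ points, and I would prepend an \emph{upper block} of $2^n$ carefully chosen points in the half-space $\{x_n>-H\}$. The upper block is designed so that (i) it is itself right-equidistant and (ii) every point of the upper block is at one common $\ell_\infty$-distance to every point of the lower block — this is arranged by letting the vertical gap in the last coordinate dominate the horizontal spread of the lifted $S_{n-1}$. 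For instance, for $n=2$ one obtains the explicit sequence
\[
(0,0),\ (-4,4),\ (4,0),\ (4,4),\ (2,-4),\ (0,-4),\ (4,-4),
\]
which is right-equidistant with consecutive common distances $4,8,4,8,2,4$.

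For the upper bound, given a right-equidistant sequence $\x^{(1)},\dots,\x^{(m)}$, I would translate and rescale so that $\x^{(1)}=\mathbf{0}$ and $d_1=1$, placing every $\x^{(i)}$ with $i\ge 2$ on $\partial[-1,1]^n$. Using the symmetries of the cube (coordinate permutations and reflections), I may further assume $x^{(m)}_n=1$. Partitioning $\{2,\dots,m\}$ into $I^+=\{i:x^{(i)}_n=1\}$ (points on the top face) and $I^-=\{i:x^{(i)}_n<1\}$, the sub-sequence on $I^+$ lies on $\{x_n=1\}\cong\R^{n-1}_\infty$ and remains right-equidistant there, giving $|I^+|\le f(n-1)$ by induction. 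Combined with the analogous bound $|I^-|\le f(n-1)$, this yields $m-1\le 2f(n-1)$ as required.

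The main obstacle is precisely the bound on $|I^-|$: naively projecting out the last coordinate is not injective on $I^-$ and can fail to preserve pairwise $\ell_\infty$-distances when the vertical gap between two points of $I^-$ dominates their horizontal separation. To overcome this I would exploit that, since $\x^{(m)}$ is the last point and $x^{(m)}_n=1$, each $i\in I^-$ satisfies $1-x^{(i)}_n\le d_i$; this constraint rigidifies the height values along $I^-$ and should allow one to build a height-adjusted embedding of $I^-$ into $\R^{n-1}_\infty$ that preserves the right-equidistance structure, closing the induction.
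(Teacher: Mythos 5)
Your overall inductive frame (proving $f(n)\le 2f(n-1)+1$ and matching it from below) is the same as the paper's, but the mechanism you propose for the upper bound has a genuine gap at exactly the step you flag yourself: the bound $|I^-|\le f(n-1)$. The set $I^-$ is a right-equidistant subsequence of points that merely avoid one facet of the unit cube around $\x^{(1)}$; it is still a fully $n$-dimensional configuration, so bounding it by $f(n-1)$ is essentially as hard as the original problem. The obstruction is concrete: two points $\x^{(i)},\x^{(j)}\in I^-$ may satisfy $|x^{(i)}_n-x^{(j)}_n|>\|\widehat{\x}^{(i)}-\widehat{\x}^{(j)}\|_\infty$, in which case their distance is carried entirely by the last coordinate, and no ``height-adjusted embedding'' into $\R^{n-1}_\infty$ can be defined coordinatewise so as to preserve all pairwise distances (three points of $I^-$ can form a vertical triple whose mutual distances are governed by the one-dimensional problem, while simultaneously interacting horizontally with the rest of $I^-$). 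The inequality $1-x^{(i)}_n\le d_i$ that you extract from the position of $\x^{(m)}$ is true but far too weak to rigidify the heights. The paper resolves precisely this difficulty by a different decomposition: it puts a partial order on the points ($\x\preceq\y$ iff $\|\widehat{\y}-\widehat{\x}\|_\infty<y_n-x_n$), shows every chain has at most $3$ elements, every antichain projects isometrically to a right-equidistant sequence in $\R^{n-1}_\infty$ (hence has at most $f(n-1)$ elements), and — the key extra lemma — that any two $3$-element chains intersect, so a Dilworth decomposition has at most one chain of size $3$; this gives $m\le 3+2(f(n-1)-1)=f(n)$. Your $I^+/I^-$ split does not separate the ``vertically realized'' pairs from the ``horizontally realized'' ones, which is what the induction actually needs.

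The lower bound is also only a sketch: the whole content there is the existence of the ``upper block'' of $2^n$ points, each at a common distance from all of its successors \emph{and} from every point of the lifted $S_{n-1}$, with that distance forced to equal its height above the lower hyperplane — and you do not construct it beyond $n=2$. Your $n=2$ example does check out, and the block structure is in fact realizable in general: the paper's construction (indicator vectors of nonempty subsets of $[n]$, each taken at two scales $2^{-i}$ and $2^{1-i}$, ordered so that supersets precede subsets, plus the origin) becomes exactly your two-block recursion if one orders all subsets containing $n$ first. So the lower-bound idea is sound but needs the explicit upper block to be a proof; the upper bound needs a genuinely different idea.
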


\begin{theorem} \label{T_right_1}
	There exists a right-equidistant sequence of $4n-1$ points in $\R_1^n$ for all $n\in\N$.
\end{theorem}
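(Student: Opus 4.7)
The plan is to exhibit an explicit right-equidistant sequence of $4n-1$ points in $\R_1^n$. The case $n=1$ is handled by the trivial sequence $(0,1,-1)$ in $\R$, so assume $n\ge 2$. Take $\x^{(1)}=\mathbf{0}$ and $\x^{(2)}=\e_1$, and arrange for all remaining points to lie on the ``left cap'' $\{\y\in\R^n:\|\y\|_1=1,\ y_1\le 0\}$ of the unit $\ell_1$-sphere. Any point $\y$ of this left cap satisfies $\|\x^{(1)}-\y\|_1=1$, and a short calculation gives $\|\x^{(2)}-\y\|_1=2$, so the equidistance conditions coming from the first two points will be automatic regardless of how the remaining points are chosen.

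The left cap is a union of $2(n-1)$ arms, all emanating from the west vertex $B=-\e_1$. For each $k\in\{2,\dots,n\}$ and $\sigma\in\{\pm 1\}$, parametrize the arm $A_{k,\sigma}$ by $A_{k,\sigma}(t)=-(1-t)\e_1+\sigma t\,\e_k$, $t\in[0,1]$. A direct computation yields the key distance formulas: two points on the same arm at parameters $t_1,t_2$ lie at $\ell_1$-distance $2|t_1-t_2|$, while two points on different arms at parameters $t_1,t_2$ lie at $\ell_1$-distance $2\max(t_1,t_2)$, and $B$ consistently plays the role of the parameter-zero point of every arm. This $\max$-structure across arms is what makes a long right-equidistant sequence possible.

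Now order the $2(n-1)$ arms arbitrarily as $A^{(1)},\dots,A^{(2n-2)}$ and set $a_j=2^{-j}$. For each $j=1,\dots,2n-3$ put $\x^{(2j+1)}=A^{(j)}(a_j)$ and $\x^{(2j+2)}=A^{(j)}(2a_j)$; on the last arm place $\x^{(4n-3)}=A^{(2n-2)}(a_{2n-2})$, $\x^{(4n-2)}=B$, and $\x^{(4n-1)}=A^{(2n-2)}(2a_{2n-2})$, for a total of $4n-1$ points. Verifying right-equidistance proceeds by a case analysis on $i$: for $i=1,2$ it has already been arranged, and for $i=2j+1$ (resp.~$i=2j+2$) the common distance to all later points works out to $2a_j$ (resp.~$4a_j$) via the same-arm formula for the immediate next point on the arm and the cross-arm $\max$ formula elsewhere. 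The only nontrivial step, which I expect to be the main thing to double-check, is that all later arm parameters are bounded above by the current arm's parameter so that the $\max$ collapses; this holds because the choice $a_{j+1}=a_j/2$ gives $2a_{j+1}=a_j$ exactly, and the parameters then decay geometrically, so the required monotonicity propagates throughout the sequence.
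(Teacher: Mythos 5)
Your construction is correct and is essentially the paper's: translating your picture by $+\e_1$ (so that $B$ becomes the origin), your arm points $A_{k,\sigma}(t)=-\e_1+t(\e_1+\sigma\e_k)$ become exactly the paper's scaled translated cross-polytope vertices $2^{-j}\V$, $2^{1-j}\V$ with $\V=\e_1\pm\e_k$, your two special points $\mathbf{0},\e_1$ play the role of the paper's $\e_1,2\e_1$ (the direction $\V=2\e_1$), and the same-arm/cross-arm $\max$ computation is the paper's verification. The only cosmetic difference is that you place the center $B$ second-to-last rather than last, which, as you note, still satisfies the right-equidistance constraints.
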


Though we suspect the maximum size of a right-equidistant sequence in $\R_1^n$ to be much closer (if not equal) to the linear lower bound given by Theorem~3 than to the exponential $O(3^nn)$ from Theorem~1, we could not substantially improve upon the last. More specifically, we can only show that any right-equidistant sequence in $\R_1^n$ consists of no more than $3^n$ points, but we will not go into the details considering the insignificance of this improvement\footnote{However, here is the sketch. Take the points of a given right-equidistant sequence one by one and consider the locus of a point that can be added on the next step as a simplicial complex. After the first taken point, this locus is a cross-polytope and thus has $3^n$ faces. Moreover, with each new step the simplicial complex must lose at least one face.}.

The second problem we consider  originates in a paper \cite{GRS} by Graham, Rothschild, and Straus. Given $n \in \N$ and $p \in [1, \infty]$, after a proper scaling, it is easy to see the existence of $e(\R_p^n)$ points in $\R_p^n$ with pairwise {\it unit} distances. In particular, the maximum number of points in $\R_p^n$ with pairwise {\it odd} integral distances is not less than $e(\R_p^n)$. Graham, Rothschild, and Straus \cite{GRS} showed that this trivial lower bound is essentially optimal in the Euclidean case. More precisely, they proved the following.

\begin{theorem}[\cite{GRS}] \label{T_GRS}
	The maximum number of points in $\R_2^n$ with pairwise odd distances equals $n+2$ if $n \equiv 14 \pmod{16}$, and $n+1$ otherwise.
\end{theorem}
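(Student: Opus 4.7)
The plan is to combine a rank argument on a scaled Gram matrix with a determinant computation modulo~$16$. Let $\x^{(1)}, \ldots, \x^{(m)} \in \R_2^n$ have pairwise odd integer distances. Translating so that $\x^{(1)} = 0$, set $\V_i := \x^{(i+1)}$ for $i = 1, \ldots, m-1$, and form the symmetric integer matrix $A = 2G$ with entries
\begin{equation*}
    A_{ii} = 2\|\V_i\|_2^2, \qquad A_{ij} = \|\V_i\|_2^2 + \|\V_j\|_2^2 - \|\V_i - \V_j\|_2^2 \quad (i \ne j).
\end{equation*}
Since $A = 2 V^{T} V$ with $V$ of size $n \times (m-1)$, we have $\mathrm{rank}(A) \le n$, so $\det A = 0$ whenever $m \ge n+2$. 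Every odd integer squared is $\equiv 1 \pmod 8$, hence $A_{ii} \equiv 2 \pmod{16}$ and $A_{ij} \equiv 1 \pmod 8$; consequently $A = (I+J) + 8B$ for some symmetric integer matrix $B$ with $B_{ii}$ even, where $J$ denotes the all-ones matrix.

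The $(m-1) \times (m-1)$ matrix $I + J$ satisfies $\det(I+J) = m$ and $\mathrm{adj}(I+J) = mI - J$, so the expansion $\det(X + 8B) = \det X + 8\,\mathrm{tr}(\mathrm{adj}(X) B) + 64 R$ with $R \in \Z$ applied to $X = I + J$ yields
\begin{equation*}
    \det A \;\equiv\; m + 8 \, \mathrm{tr}\bigl((mI - J) B\bigr) \pmod{64}.
\end{equation*}
Now both $\mathrm{tr}(B) = \sum_i B_{ii}$ and $\mathrm{tr}(JB) = \sum_{i,j} B_{ij} = \sum_i B_{ii} + 2 \sum_{i < j} B_{ij}$ are even (by the parity of the diagonal of $B$ together with its symmetry). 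Hence the perturbation term is divisible by $16$, giving $\det A \equiv m \pmod{16}$. Combined with $\det A = 0$, this forces $m \equiv 0 \pmod{16}$ whenever $m \ge n+2$.

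Applying the same analysis to any $m'$-point subset with $n+2 \le m' \le m$ also yields $m' \equiv 0 \pmod{16}$. Since two consecutive integers cannot both be divisible by $16$, we conclude $m \le n+2$, with equality only when $n+2 \equiv 0 \pmod{16}$, i.e.\ $n \equiv 14 \pmod{16}$. The matching lower bound $m \ge n+1$ is realised by the vertices of a regular $n$-simplex of unit edge length; the stronger lower bound $m \ge n+2$ in the exceptional case requires an explicit combinatorial construction (for the base case $n = 14$, a $16$-point configuration built from a binary code or Hadamard-type design whose squared pairwise distances are all odd, then extended to $n = 14 + 16k$ by a product-type construction). The main obstacles are the delicate mod-$16$ cancellation in the determinant expansion (hinging crucially on both the symmetry of $B$ and the parity of its diagonal) and the explicit exhibition of the extremal $(n+2)$-point configurations when $n \equiv 14 \pmod{16}$.
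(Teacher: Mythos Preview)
The paper does not prove this theorem; it is quoted from \cite{GRS} as background, with the paper's own contributions concerning the $\ell_\infty$ and $\ell_1$ metrics (Theorems~\ref{T_odd_infty} and~\ref{T_odd_1}). There is therefore no proof in the paper to compare your argument against.

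For what it is worth, your upper-bound argument is essentially the classical one from \cite{GRS} and is correct: the doubled Gram matrix $A$ satisfies $A \equiv I+J \pmod 8$ with diagonal entries $\equiv 2 \pmod{16}$, the first-order determinant expansion gives $\det A \equiv m \pmod{16}$, and the rank deficiency forces $m \equiv 0 \pmod{16}$ once $m \ge n+2$; applying this to subsets of sizes $m$ and $m-1$ yields $m \le n+2$, with equality possible only when $n \equiv 14 \pmod{16}$. The only genuine gap is the construction of $n+2$ points in the exceptional case: your sketch (``a binary code or Hadamard-type design \ldots\ extended by a product-type construction'') is not a proof, and a na\"{\i}ve direct product does not preserve odd integer Euclidean distances, since $\sqrt{a^2+b^2}$ is rarely an odd integer for odd $a,b$. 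You would need to supply an explicit configuration (as \cite{GRS} does) to complete the argument.
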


Note that the maximum number of points in a normed space $\R^n_N$ with pairwise odd distances can not be bounded from above in general for all $n \ge 2$. (It is easy to see that on the line one can choose at most two points.) Moreover, given the dimension $n \ge 2$, we can construct the norm $N$ such that one can choose arbitrarily many points in $\R^n_N$ with pairwise odd distances. Nevertheless, one can show that this value is bounded in case of the maximum metric via Ramsey theory\footnote{Indeed, let $S$ be an odd-distance set in $\R_{\infty}^n$. Consider the complete graph with $S$ being the set of vertices, and assign to each edge between vertices $\x = (x_1, \ldots, x_n)$ and $\y = (y_1, \ldots, y_n)$ such color $k$ that $\|\x - \y\|_{\infty} = |x_k - y_k|$. Since no two odd integers add up to an odd integer, the graph does not contain a monochromatic triangle. Therefore, $|S| < r(3;n)$, where $r(3;n)$ stands for the multicolor Ramsey number, see \cite{FPS}.}. Moreover, our next result that can be considered as the max-norm analogue of Theorem~4 shows that a natural construction given by the vertices of the unit hypercube is optimal in any dimension.

\begin{theorem} \label{T_odd_infty}
	The maximum number of points in $\R_{\infty}^n$ with pairwise odd distances equals $2^n$ for all $n \in \N$.
\end{theorem}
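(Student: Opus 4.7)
The lower bound $|S|\ge 2^n$ is witnessed by the vertex set $\{0,1\}^n$ of the unit hypercube, whose pairwise $\ell_\infty$-distances all equal $1$, so my plan is to establish the matching upper bound $|S|\le 2^n$ by induction on the dimension $n$.

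The base case $n=1$ is immediate: three collinear points $a<b<c$ with pairwise odd distances would force $c-a=(b-a)+(c-b)$ to be a sum of two odd integers, contradicting its oddness. For the inductive step, given an odd-distance set $S\subset \R^n_{\infty}$, I will split it into two halves according to the parity of $\lfloor x_1\rfloor$ and apply the induction hypothesis to each half after projecting away the first coordinate. Explicitly, I put
\[
    S^{\epsilon} = \{\x\in S : \lfloor x_1\rfloor \equiv \epsilon \pmod 2\}, \qquad \epsilon\in\{0,1\}.
\]
The heart of the argument is the claim that for any two distinct $\x,\y$ lying in the same part $S^{\epsilon}$, the distance $\|\x-\y\|_{\infty}$ is never attained at the first coordinate.

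To prove this claim, I will write $x_1-y_1 = \bigl(\lfloor x_1\rfloor - \lfloor y_1\rfloor\bigr) + \bigl(\{x_1\}-\{y_1\}\bigr)$: the two integer parts share parity, so their difference is even, while the two fractional parts lie in $[0,1)$. If the fractional parts coincide, then $x_1-y_1$ is an even integer; otherwise it is a non-integer. In neither case can $|x_1-y_1|$ equal the odd integer $\|\x-\y\|_{\infty}$. It follows that projecting onto the last $n-1$ coordinates is a distance-preserving injection of $S^{\epsilon}$ into $\R^{n-1}_{\infty}$ whose image is again an odd-distance set, so the inductive hypothesis gives $|S^{\epsilon}|\le 2^{n-1}$ for both $\epsilon\in\{0,1\}$, and hence $|S|\le 2^n$.

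The main obstacle I expect is choosing the right partition: since the coordinates of points in $S$ need not be integers, partitioning by the residue of $x_1$ modulo $2$ would a priori create infinitely many classes, while partitioning merely by $\lfloor x_1\rfloor$ loses the $\bmod\,2$ information that controls oddness. Splitting by the parity of $\lfloor x_1\rfloor$ strikes the right balance, with the fractional parts taking care of themselves through the dichotomy above.
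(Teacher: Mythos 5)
Your proof is correct, and it takes a genuinely different route from the paper. The paper introduces a partial order on $\R_{\infty}^n$ (declaring $\x \preceq \y$ when the max-distance is attained strictly at the last coordinate), shows via the one-dimensional base case that no three points of an odd-distance set form a chain, shows that antichains project isometrically to $\R_{\infty}^{n-1}$, and then combines $\ell(\p)\le 2$ with $w(\p)\le 2^{n-1}$ through Dilworth's theorem. You instead split $S$ into two classes by the parity of $\lfloor x_1\rfloor$ and observe that within a class $x_1-y_1$ is either an even integer or a non-integer, so it can never realize the odd max-distance; each class therefore projects isometrically and injectively to $\R_{\infty}^{n-1}$, giving $|S|\le 2\cdot 2^{n-1}$ directly. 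Your argument is more elementary (no Dilworth) and exploits the integrality of odd distances head-on; it also yields the paper's Theorem~\ref{T_div_infty} by partitioning according to $\lfloor x_1\rfloor \bmod k$. What the poset machinery buys the authors is reusability: the same order relation drives their proof of Theorem~\ref{T_right_infty} for right-equidistant sequences, where no integrality is available and a parity partition has no analogue, so a chain-length bound of $3$ plus a finer analysis of $3$-chains is needed. For Theorem~\ref{T_odd_infty} alone, your approach is a clean and complete alternative.
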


The situation is much more obscure in case of the Manhattan distance. For all $n \in \N$, we found an explicit configuration of $7n$ points in $\R_1^{3n}$ with pairwise odd distances. This example shows that the vertices of the standard cross-polytope do not provide an optimal construction. On the other hand, some finite upper bound follows from Theorem~\ref{T_odd_infty}. Indeed, since $\R_1^n$ can be isometrically embedded\footnote{For example, by mapping each point $(x_1, \ldots, x_n)$ to the point whose coordinates are all linear combinations of type $x_1\pm x_2\pm\ldots\pm x_n$.} in $\R_{\infty}^{2^{n-1}}$, the size of any odd-distance configuration in $\R_1^n$ does not exceed $2^{2^{n-1}}$.
Our next result provides better upper bound that grows as $n!$ with $n$.
It seems to be an interesting open problem to find the correct asymptotic.

\begin{theorem} \label{T_odd_1}
	The number of points in $\R_{1}^n$ with pairwise odd integral distances does not exceed $n!\cdot n\cdot \ln n \cdot (4+o(1))$ as $n \to \infty$.
\end{theorem}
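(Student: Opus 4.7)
}
The plan is to combine a permutation-based pigeonhole argument with a structural lemma that yields the $n\ln n$ factor.

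For each point $\x^{(i)}$ of the odd-distance configuration, let $\pi^{(i)} \in S_n$ be the permutation that sorts the coordinates $x^{(i)}_1, \ldots, x^{(i)}_n$ in, say, non-increasing order (breaking ties lexicographically). Since there are only $n!$ possibilities, by pigeonhole at least $m/n!$ points share a common permutation $\pi$. After reindexing coordinates via $\pi$, this subset lies in the sorted region $\Omega = \{\x \in \R^n : x_1 \geq x_2 \geq \cdots \geq x_n\}$ and inherits pairwise odd integer $\ell_1$ distances.

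The key structural lemma I would then prove is the following: \emph{any set of points in $\Omega$ with pairwise odd integer $\ell_1$ distances has cardinality at most $(4 + o(1))\, n \ln n$.} Combined with the pigeonhole bound above, this yields $m \leq (4+o(1)) \cdot n! \cdot n \ln n$, as desired.

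\textbf{Main obstacle.} The crux of the argument is the structural lemma on $\Omega$. I expect its proof to exploit that, for $\x, \y \in \Omega$, the sorted structure restricts the possible sign patterns of the coordinate differences, so that $\|\x-\y\|_1$ can be expanded as a sum of ``crossing'' terms of bounded complexity. I anticipate the proof to combine a parity argument modulo $2$ (to exploit the odd-integer condition) with a greedy or coupon-collector-style counting step that produces both the $\ln n$ factor and the explicit constant $4 + o(1)$; the harmonic sum $1 + \tfrac12 + \cdots + \tfrac1n \sim \ln n$ that naturally arises from iterative covering of the $n$ sorted coordinates is the most plausible source of the logarithmic term. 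A secondary technical difficulty is handling tie-breaking and degeneracies in the sorting, which must be done carefully so as not to inflate either the $n!$ or the constant $4$ factor.
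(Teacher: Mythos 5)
Your reduction does not actually reduce anything: the ``structural lemma'' you defer to is equivalent to (in fact stronger than) the theorem itself, because membership in a single sorted region $\Omega=\{\x\in\R^n: x_1\ge x_2\ge\cdots\ge x_n\}$ is no restriction at all up to translation. Given \emph{any} finite set $P\subset\R^n$ with pairwise odd integral $\ell_1$-distances, let $D=\max_{\x\in P,\,i}|x_i|$ and translate $P$ by the vector $\V$ with $v_i=3D(n-i)$; then for every $\x\in P$ and every $i$ we have $(x_i+v_i)-(x_{i+1}+v_{i+1})\ge 3D-2D>0$, so $P+\V\subset\Omega$, while all $\ell_1$-distances are unchanged. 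Hence every odd-distance configuration already lies (after a harmless translation) in a single class of your pigeonhole partition, the factor $n!$ buys nothing, and your lemma asserts that \emph{every} odd-distance set in $\R_1^n$ has at most $(4+o(1))\,n\ln n$ points --- a strengthening of Theorem~\ref{T_odd_1} by a factor of $n!$ for which you offer no proof, and whose truth is precisely the open problem the paper points out (the correct asymptotics are unknown). The heuristic you lean on is also false: for $\x,\y\in\Omega$ the difference $\x-\y$ ranges over all of $\R^n$ (e.g.\ $(3,1)$ and $(2,2)$ are both sorted, and their coordinatewise differences have both signs), so sortedness imposes no restriction on the sign pattern of $\x-\y$.

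The paper's argument is of a different nature: it bounds the chromatic number of $\R_1^n$ with forbidden odd distances and uses that the clique number is at most the chromatic number. Take $C=\{\x:\|\x\|_1<1/2\}$, of volume $1/n!$, and the lattice $\Lambda$ spanned by $\e_1+\e_n,\dots,\e_{n-1}+\e_n,2\e_n$, which has determinant $2$ and only even pairwise $\ell_1$-distances. The triangle inequality shows that $\mathcal{C}=\bigsqcup_{\y\in\Lambda}(C+\y)$ contains no two points at odd integral distance, and the Erd\H{o}s--Rogers covering theorem covers $\R^n$ by $\frac{\det\Lambda}{\text{vol}(C)}\,(2+o(1))\,n\ln n=n!\cdot n\ln n\cdot(4+o(1))$ translates of $\mathcal{C}$, each of which contains at most one point of an odd-distance set. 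This is where both the $n!$ (from the volume of the cross-polytope) and the $n\ln n$ (from the covering density) genuinely come from; to salvage a permutation-based approach you would need a splitting of the configuration that is not translation-invariant, together with an actual proof of the per-class bound.
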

As a matter of fact, the bound we obtain is actually an upper bound on the {\it chromatic number} of $\R^n_1$ with forbidden odd distances. (We combine it with the trivial fact that the clique number of a graph is at most its chromatic number). Surprisingly, it is not known if the chromatic number of the Euclidean plane with forbidden odd distances is finite or not (see, e.g., \cite{Shelah}).

\section{Odd distance sets}

We split this section into two parts. In the first one, we  deal with the case of the maximum metric and prove Theorem~\ref{T_odd_infty}. In the second one, we consider Manhattan metric and prove Theorem~\ref{T_odd_1}. The proofs of Theorem~\ref{T_odd_infty} and Theorem~\ref{T_right_infty} from Section~\ref{sec_right} are based on the same trick, applied earlier by various authors \cite{BloWil,FKS,Swan99} to other extremal questions about $\R_{\infty}^n$. The trick is to introduce a poset structure on an $n-$dimensional space with the maximum metric.

\subsection{Maximum metric: proof of Theorem~\ref{T_odd_infty}}

First, let us recall the necessary basic notions.

A {\it partially ordered set}, or {\it poset} for shorthand, is a pair $\p = (S,\preceq)$, where $S$ is a set and $\preceq$ is a reflexive, antisymmetric and transitive binary relation on its elements. We call $x,y \in S$ {\it comparable} if $x\preceq y$, and we say that they are {\it incomparable} otherwise. A set of pairwise comparable elements is called a {\it chain}, while a set of pairwise incomparable elements is called an {\it antichain}. The {\it length $\ell(\p)$} and the {\it width $w(\p)$} of the poset $\p$ are the sizes of the largest chain and antichain, respectively. Let us recall Dilworth's theorem.

\begin{theorem}[Dilworth's theorem \cite{Dilworth}] \label{Dilworth's theorem}
	Let $\p = (S,\preceq)$  be an arbitrary finite poset. Then the width $w(\p)$ of $\p$ is equal to the minimum number of disjoint chains that altogether cover $S$. In particular, $|S| \le \ell(\p)w(\p)$.
\end{theorem}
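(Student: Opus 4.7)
The plan is to prove Dilworth's theorem by induction on $|S|$, the cardinality of the underlying set. The easy half is the lower bound: if $A\subseteq S$ is an antichain of size $w(\p)$, then since any chain meets $A$ in at most one element, covering $S$ by disjoint chains requires at least $w(\p)$ of them. The consequence $|S|\le\ell(\p)w(\p)$ then follows immediately from the equality part, because each of the $w(\p)$ covering chains has size at most $\ell(\p)$. So the real content is the matching upper bound: $S$ admits a cover by $w:=w(\p)$ disjoint chains.

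For the inductive step, I would choose a maximal chain $C\subseteq S$ and consider the poset $\p':=\p\setminus C$. There are two cases depending on whether $w(\p')<w$ or $w(\p')=w$. In the first case one applies induction to $\p'$ to get $w-1$ chains covering it, and appends $C$ to finish. The harder case is $w(\p')=w$, in which case we fix an antichain $A=\{a_1,\ldots,a_w\}$ inside $\p'$ and define
\begin{equation*}
S^{+}=\{x\in S:x\succeq a_i\text{ for some }i\},\qquad S^{-}=\{x\in S:x\preceq a_i\text{ for some }i\}.
\end{equation*}
The key geometric observation is that $S=S^{+}\cup S^{-}$, because otherwise an element outside this union would be incomparable to every $a_i$ and would enlarge $A$ to an antichain of size $w+1$. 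Moreover, $S^{\pm}$ are proper subsets of $S$: the maximum element of the maximal chain $C$ is maximal in $\p$, hence it cannot lie in $S^{-}$ (it would coincide with some $a_i$, but $A\cap C=\emptyset$), and symmetrically the minimum of $C$ is not in $S^{+}$. Thus induction applies to both $S^{+}$ and $S^{-}$, each of which still has width $w$ since it contains the antichain $A$.

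The main obstacle is combining the two inductive chain decompositions into a single one on $S$. Here I would argue as follows. Since $w(S^{+})=w$ and $S^{+}$ is covered by $w$ chains $C_1^{+},\ldots,C_w^{+}$, and since any antichain meets any chain in at most one point, each $C_i^{+}$ contains exactly one element of $A$; relabel so that $a_i\in C_i^{+}$. I then claim $a_i$ is the minimum of $C_i^{+}$: any $z\in C_i^{+}$ with $z\prec a_i$ would belong to $S^{+}$, so $z\succeq a_j$ for some $j$, forcing $a_j\prec a_i$ and contradicting that $A$ is an antichain. Symmetrically, applying induction to $S^{-}$ yields chains $C_1^{-},\ldots,C_w^{-}$ with $a_i$ the maximum of $C_i^{-}$. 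Setting $D_i:=C_i^{-}\cup C_i^{+}$ then produces a chain (the two halves glue along the common element $a_i$ and everything in $C_i^{-}$ is $\preceq a_i\preceq$ everything in $C_i^{+}$), and the $D_i$ are disjoint and cover $S^{-}\cup S^{+}=S$, completing the induction.
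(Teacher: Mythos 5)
The paper does not prove this statement at all: Dilworth's theorem is quoted as a known classical result with a citation to Dilworth's 1950 paper, and is then used as a black box in the proofs of Theorems~\ref{T_odd_infty} and~\ref{T_right_infty}. So there is no in-paper argument to compare against; what you have supplied is a self-contained proof of the cited result. Your argument is the standard induction on $|S|$ via a maximal chain $C$ and the case split on whether $w(\p\setminus C)$ drops, and it is essentially correct: the lower bound, the derivation of $|S|\le\ell(\p)w(\p)$, the covering $S=S^{+}\cup S^{-}$, the properness of $S^{\pm}$ via the extremal elements of the maximal chain, and the gluing of the two chain decompositions along $A$ all go through. Two small points you should make explicit. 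First, state the base case ($|S|\le 1$, or note that Case~1 always applies when $S$ itself is an antichain). Second, the disjointness of the glued chains $D_i=C_i^{-}\cup C_i^{+}$ for $i\ne j$ needs the observation that $S^{+}\cap S^{-}=A$: if $a_j\preceq x\preceq a_k$ then $a_j\preceq a_k$, so $j=k$ and $x=a_j$ by antisymmetry; hence $C_i^{+}\cap C_j^{-}\subseteq A$ can only contain an element equal to both $a_i$ and $a_j$, forcing $i=j$. Without that remark the claim that the two families of chains merge into $w$ pairwise disjoint chains is asserted rather than proved, though the fix is one line.
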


For any $\x = (x_1, \ldots, x_n) \in \R^n$, put $\widehat{\x} \coloneqq (x_1, \ldots, x_{n-1})\in\mathbb{R}^{n-1}$. We define a binary relation $\preceq$ on $\R_{\infty}^n$ by
\begin{equation} \label{eq_poset}
	\x \preceq \y \mbox{ if and only if either } \x=\y \mbox{ or }\|\widehat{\y}-\widehat{\x}\|_\infty < y_n-x_n
\end{equation}
for all $\x, \y \in \R^n$. In particular, if $\x\preceq \y$ and $\x\neq \y$, then $|y_i-x_i|<y_n-x_n$ for all $i \in [n-1]$. Note that we will make use of the strict inequality in the definition. One can easily check that $(\R_{\infty}^n,\preceq)$ is a poset. Indeed, reflexivity and antisymmetry are immediate from the definition, while transitivity follows from the triangle inequality. It is easy to see  that the following  claim holds.

\begin{claim} \label{claim_compare}
	If $\x, \y \in \R^n$ are comparable with respect to \eqref{eq_poset}, then $\|\y-\x\|_\infty = |y_n-x_n|$. If they are incomparable, then $\|\y-\x\|_\infty = \|\widehat{\y}-\widehat{\x}\|_\infty$.
\end{claim}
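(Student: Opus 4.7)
The plan is to unpack the definition of $\preceq$ and combine it with the identity $\|\y-\x\|_\infty = \max\bigl(\|\widehat{\y}-\widehat{\x}\|_\infty,\, |y_n-x_n|\bigr)$, which follows immediately from the definition of the maximum metric (the coordinates split into the first $n-1$ and the last one). The whole claim will reduce to comparing the two terms in this maximum in the two cases.

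For the comparable case, I would assume without loss of generality that $\x \preceq \y$. If $\x = \y$ both sides are $0$, so we may suppose $\x \neq \y$. Then by \eqref{eq_poset} we have $\|\widehat{\y}-\widehat{\x}\|_\infty < y_n-x_n$, which in particular forces $y_n - x_n > 0$, hence $|y_n-x_n| = y_n - x_n$. Plugging this strict inequality into the split formula gives
\[
\|\y-\x\|_\infty = \max\bigl(\|\widehat{\y}-\widehat{\x}\|_\infty,\, |y_n-x_n|\bigr) = |y_n-x_n|,
\]
as required.

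For the incomparable case, incomparability means that neither $\x \preceq \y$ nor $\y \preceq \x$ holds, and in particular $\x \neq \y$. Negating \eqref{eq_poset} in both directions yields $\|\widehat{\y}-\widehat{\x}\|_\infty \geq y_n - x_n$ and $\|\widehat{\y}-\widehat{\x}\|_\infty \geq x_n - y_n$, so $\|\widehat{\y}-\widehat{\x}\|_\infty \geq |y_n - x_n|$. The split formula then gives $\|\y-\x\|_\infty = \|\widehat{\y}-\widehat{\x}\|_\infty$.

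There is essentially no obstacle here: the only subtlety is being careful that the strict inequality in \eqref{eq_poset} is used for the comparable case (to ensure $y_n - x_n$ is positive and thus equal to $|y_n-x_n|$), while its negation in the incomparable case yields only the weak inequality $\geq$, which is exactly what is needed to conclude that the horizontal term dominates in the maximum.
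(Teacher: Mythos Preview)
Your argument is correct and is exactly the natural unpacking of the definition that the paper has in mind; the paper in fact omits the proof entirely, remarking only that the claim is easy to see.
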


\begin{proof}[Proof of Theorem~\ref{T_odd_infty}]
	As we mentioned in the introduction, in the light of construction given by the set of vertices of a unit hypercube, we only need to prove the upper bound.
	
	We proceed by induction on $n$. First, observe that  the statement is trivial for $n=1$. Indeed, for any three reals $x<y<z$, all three differences $z-y$, $y-x$, and $z-x = (z-y)+(y-x)$ cannot be simultaneously odd. So, we turn to the induction step.
	
	Fix $n>1$ and let $S$ be a set of points in $\R_{\infty}^n$ with pairwise odd distances. Consider the poset $\p =(S,\preceq)$ with the partial order defined by \eqref{eq_poset}.
	
	First, observe that no three distinct points $\x, \y, \z \in S$ form a chain. Indeed, if $\x \preceq \y \preceq \z$, then by Claim~\ref{claim_compare} all three differences $z_n-y_n$, $y_n-x_n$, and $z_n-x_n$ are odd, which contradicts the base of induction. Thus, $\ell(\p) \le 2$.
	
	Now let  $A\subset S$ be an antichain of size $w(\p)$ with respect to $\preceq$. Put
	$$\widehat{A} = \{\widehat{\x}: \x \in A\} \subset \R_{\infty}^{n-1}.$$
	Claim~\ref{claim_compare} implies that all the pairwise distances between distinct points of $\widehat{A}$ are odd. Therefore,  $w(\p) = |A| = |\widehat{A}| \le 2^{n-1}$ by induction.
	
	Using Dilworth's theorem, we  get that $|S| \le 2\cdot2^{n-1} = 2^n$.
\end{proof}

Observe that this method allows to reduce multidimensional questions on the maximum number of points in $\R_{\infty}^n$ with some additional arithmetical restrictions on the distances between them to one-dimensional number-theoretical problems. For instance, one can effortlessly deduce the following.

\begin{theorem} \label{T_div_infty}
	The maximum number of points in $\R_{\infty}^n$ whose pairwise distances are integers not divisible by $k$ equals  $k^n$ for all $k,n \in \N$.
\end{theorem}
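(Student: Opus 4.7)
The plan is to mimic the proof of Theorem~\ref{T_odd_infty} almost verbatim, replacing ``odd integer'' by ``integer not divisible by $k$'' throughout, and to argue by induction on $n$. For the lower bound I would exhibit the grid $\{0,1,\dots,k-1\}^n$: any two distinct points in it are at $\ell_\infty$-distance in $\{1,2,\dots,k-1\}$, which is an integer not divisible by $k$, giving $k^n$ points.

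For the upper bound I would first handle the base case $n=1$ by a pigeonhole argument: if a set of reals has all pairwise distances being integers, then all its points lie in a single coset of $\Z$; among any $k+1$ elements of $\Z$ two are congruent modulo $k$, so at most $k$ points are admissible. For the induction step, take a set $S\subset\R_\infty^n$ with pairwise distances being integers not divisible by $k$, and equip it with the poset structure from~\eqref{eq_poset} used in the proof of Theorem~\ref{T_odd_infty}. If $\x^{(1)}\preceq\dots\preceq\x^{(m)}$ is a chain, then by Claim~\ref{claim_compare} all the pairwise distances equal the corresponding differences of last coordinates; applying the $n=1$ base case to $\{x^{(i)}_n\}$ yields $\ell(\p)\le k$. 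For an antichain $A$, Claim~\ref{claim_compare} gives that the projection $\widehat{A}\subset\R_\infty^{n-1}$ is injective (two points with the same $\widehat{\cdot}$ would be comparable) and inherits the same pairwise distances, so by induction $w(\p)=|A|=|\widehat{A}|\le k^{n-1}$. Dilworth's theorem then delivers $|S|\le\ell(\p)\,w(\p)\le k\cdot k^{n-1}=k^n$.

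There is no real obstacle here: the only step that requires the slightest care is the verification that in the chain case the telescoping of last-coordinate differences really is controlled by the one-dimensional bound, and that the $\widehat{\cdot}$-projection remains injective on antichains, but both follow immediately from Claim~\ref{claim_compare}. The proof is thus essentially a parameterized recycling of the Theorem~\ref{T_odd_infty} argument, which is the reason the statement is advertised as a quick corollary of the method rather than a new theorem requiring new ideas.
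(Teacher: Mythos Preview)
Your proposal is correct and is precisely the argument the paper has in mind: the paper does not give a separate proof of Theorem~\ref{T_div_infty} but merely remarks that the poset/Dilworth method from the proof of Theorem~\ref{T_odd_infty} reduces the question to the one-dimensional case, which is exactly what you carry out. The only additions you make---the explicit grid $\{0,1,\dots,k-1\}^n$ for the lower bound and the pigeonhole argument for the base case $n=1$---are the obvious intended ones.
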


\subsection{Manhattan distance: proof of Theorem~\ref{T_odd_1}}

Let $C= \{\x \in \R^n: \|\x\|_1 < 1/2\}$ be a scaled open $n$-dimensional cross-polytope. It is easy check that the volume $\text{vol}(C)$ of $C$ equals $\frac{1}{n!}$, because the hyperplanes defined by the equations $\{x_i = 0\}$ split $C$ into $2^n$ simplices, each with $n$ orthogonal edges of length $1/2$.

Let $\Lambda$ be a lattice spanned by the vectors $\e_1+\e_n,\dots,\e_{n-1}+\e_n, 2\e_n$, where $\e_i$ stands for the $i$'th standard basis vector. The determinant $\det(\Lambda)$ of this lattice equals $2$ for all $n \in \N$. Besides, the Manhattan distance between any two of its vertices is even.

Therefore, no two points of the disjoint union $\mathcal{C} = \bigsqcup_{\y \in \Lambda}(C+\y)$ are odd integral distance apart. Indeed, assume that $\x^{(1)} \in C+\y^{(1)}$ and $\x^{(2)} \in C+\y^{(2)}$. Recall that $\|\y^{(1)}-\y^{(2)}\|_1$ is even. Put $\|\y^{(1)}-\y^{(2)}\|_1 = 2t$. Then, the triangle inequality implies that
\begin{equation*}
	\|\x^{(1)}-\x^{(2)}\|_1\le \|\y^{(1)}-\y^{(2)}\|_1 +\|\x^{(1)}-\y^{(1)}\|_1 + \|\x^{(2)}-\y^{(2)}\|_1 < 2t+1,
\end{equation*}
and, similarly,
\begin{equation*}
	\|\x^{(1)}-\x^{(2)}\|_1\ge \|\y^{(1)}-\y^{(2)}\|_1 -\|\x^{(1)}-\y^{(1)}\|_1 - \|\x^{(2)}-\y^{(2)}\|_1 > 2t-1.
\end{equation*}

Hence, the maximum number of points in $\R_{1}^n$ with pairwise odd integral distances does not exceed the minimum number of translates of $\mathcal{C}$ that altogether cover $\R^n$. By the classic probabilistic result due to Erd\H{o}s and Rogers \cite{ErRog1962}, the latter value is less than or equal to $\frac{\det(\Lambda)}{\text{vol}(C)} \cdot (2+o(1))n \ln n$ as $n \rightarrow \infty$.

\begin{remark}
Assigning each point with a color corresponding to any of the translates of $\mathcal{C}$ from the abovementioned construction that covers it, we obtain a coloring where the Manhattan distance between no two monochromatic points is an odd integer. Therefore, the construction described above provides an upper bound on the chromatic number of $\R_1^n$ with forbidden odd distances.
\end{remark}

\section{Right-equidistant sequences} \label{sec_right}

We split this section into three parts. In the first part, we prove the upper bound in Theorem~\ref{T_right_infty}. The proof shares some ideas with the previous section, and so we use the same notation. In the last two parts, we prove the lower bound in Theorem~\ref{T_right_infty} and prove Theorem~\ref{T_right_1}, respectively, via explicit constructions.

\subsection{Proof of the upper bound in Theorem~\ref{T_right_infty}}
    Fix a right-equidistant sequence $\x^{(1)}, \ldots, \x^{(m)}$ of points in $\R_{\infty}^n$. We prove by induction on $n$ that $m \le f(n)\coloneqq 2^{n+1}-1$.
    
    First, observe that the case $n=1$ is trivial. Indeed, for any two distinct $x,y \in \R$, there is a unique $z$ not coinciding with $y$, namely $z=2x-y$, such that $|z-x|=|y-x|$, and thus $m \le 3$. So, we turn to the induction step.
    
    Fix $n>1$. Denote by $\p$ the poset $(S, \preceq)$, where $S = \{\x^{(1)}, \ldots, \x^{(m)}\}$ and the partial order $\preceq$ is defined by \eqref{eq_poset}. For any two elements $\y,\z\in S$, we say that $\y$ \textit{occurs earlier than $\z$} if $\y = \x^{(i)}$, $\z = \x^{(j)}$, and $i < j$.

    \begin{lemma} \label{L1}
    $\ell(\p)\leq f(1)=3$.
    \end{lemma}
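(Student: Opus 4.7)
My plan is to argue by contradiction: assume that $\p$ contains a chain of length $4$, say $\z^{(i_1)} \prec \z^{(i_2)} \prec \z^{(i_3)} \prec \z^{(i_4)}$, and combine Claim~\ref{claim_compare} with the right-equidistant property to reach a one-dimensional impossibility that is essentially the $n=1$ base case already noted in the proof. Writing $a_k \coloneqq z_n^{(i_k)}$, the definition of $\preceq$ immediately forces $a_1 < a_2 < a_3 < a_4$, and Claim~\ref{claim_compare} gives $\|\z^{(i_k)} - \z^{(i_l)}\|_\infty = |a_k - a_l|$ for every pair $k \ne l$ in $\{1,2,3,4\}$.

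Next I would bring in the sequence order. Among the four chain elements, let $\z^{(i_\sigma)}$ be the one that occurs earliest in the right-equidistant sequence. The right-equidistant property then forces the three distances $\|\z^{(i_\sigma)} - \z^{(i_k)}\|_\infty$ for $k \in \{1,2,3,4\}\setminus\{\sigma\}$ to be equal, i.e.\ the three values $|a_\sigma - a_k|$ to coincide.

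The final step is purely one-dimensional: I would note that among four distinct reals $a_1<a_2<a_3<a_4$, no one of them is equidistant from the other three. This is essentially the $n=1$ base case of Theorem~\ref{T_right_infty} that was already established at the start of the induction, so it may be invoked directly. This yields the desired contradiction and shows $\ell(\p) \le 3$. The only subtlety is to keep the two orderings at play clearly separated: the poset order, which is governed by the last coordinate, versus the sequence order, which governs who is right-equidistant to whom; once these are untangled, the argument collapses to the trivial one-dimensional check.
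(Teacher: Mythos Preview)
Your argument is correct and follows essentially the same route as the paper: reduce a chain to its last coordinates via Claim~\ref{claim_compare}, and then invoke the $n=1$ base case. The only difference is cosmetic---the paper observes directly that the last coordinates of a chain, taken in the original sequence order, form a right-equidistant sequence in $\R^1$ and hence have length at most $f(1)=3$, whereas you argue by contradiction from a chain of length $4$ and use only the earliest element's equidistance property; both unwind to the same one-dimensional obstruction.
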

    
    \begin{proof}
    Assume that elements $\x^{(i_1)},\dots,\x^{(i_k)}$ are pairwise comparable\footnote{Note that we do not state that $\x^{(i_1)}\preceq\dots\preceq\x^{(i_k)}$, since the order on the chain may not coincide with the one in the sequence.}. Clearly, this subsequence is also right-equidistant. By Claim~\ref{claim_compare}, the sequence of their last coordinates is right-equidistant as well. Therefore, $k \le f(1)=3$ from the base of induction. 
    \end{proof}
    
    \begin{lemma} \label{L2}
    $w(\p)\leq f(n - 1)$.
    \end{lemma}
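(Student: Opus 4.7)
The plan is to reduce the antichain bound in $\R_\infty^n$ to a right-equidistant sequence bound in $\R_\infty^{n-1}$, which is exactly what the induction hypothesis provides. Specifically, I would fix an antichain $A\subseteq S$ with $|A|=w(\p)$, list its elements in the order they occur in the original sequence as $\x^{(i_1)},\ldots,\x^{(i_k)}$ with $i_1<\cdots<i_k$, and observe that a subsequence of a right-equidistant sequence is still right-equidistant.

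Next, I would pass to the projections $\widehat{A}=\{\widehat{\x^{(i_1)}},\ldots,\widehat{\x^{(i_k)}}\}\subset\R_\infty^{n-1}$. Because any two elements of $A$ are incomparable, the second part of Claim~\ref{claim_compare} gives $\|\x^{(i_s)}-\x^{(i_t)}\|_\infty=\|\widehat{\x^{(i_s)}}-\widehat{\x^{(i_t)}}\|_\infty$ for all $s\neq t$. Plugging this identity into the right-equidistant identities satisfied by $\x^{(i_1)},\ldots,\x^{(i_k)}$, we see that the projected sequence $\widehat{\x^{(i_1)}},\ldots,\widehat{\x^{(i_k)}}$ is itself right-equidistant in $\R_\infty^{n-1}$.

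The step that requires a little care — and is probably the main (minor) obstacle — is verifying that the projections are pairwise distinct, so that the induction hypothesis actually applies to a sequence of $k$ points. If $\widehat{\x^{(i_s)}}=\widehat{\x^{(i_t)}}$ for some $s\neq t$, then since $\x^{(i_s)}\neq\x^{(i_t)}$ their last coordinates differ; assuming without loss of generality that $x^{(i_s)}_n<x^{(i_t)}_n$, we get $\|\widehat{\x^{(i_t)}}-\widehat{\x^{(i_s)}}\|_\infty=0<x^{(i_t)}_n-x^{(i_s)}_n$, so $\x^{(i_s)}\preceq\x^{(i_t)}$ by \eqref{eq_poset}, contradicting the assumption that $A$ is an antichain.

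Having cleared this point, the induction hypothesis applied in dimension $n-1$ yields $k\leq f(n-1)$, and hence $w(\p)=|A|=k\leq f(n-1)$, which is exactly the statement of the lemma. Combined with Lemma~\ref{L1} the structural information we obtain is $\ell(\p)\leq 3$ and $w(\p)\leq 2^n-1$; note that a naive application of Dilworth's theorem would only give $|S|\leq 3(2^n-1)$, so the tight bound $2^{n+1}-1$ will presumably require an additional argument beyond Lemma~\ref{L2} itself (for instance, showing that only antichains, not chains of length $3$, can saturate the decomposition, or by a more delicate position-based counting of the sequence).
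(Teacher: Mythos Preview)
Your argument is correct and follows essentially the same route as the paper: take a maximum antichain, project to $\R_\infty^{n-1}$ via $\x\mapsto\widehat{\x}$, use Claim~\ref{claim_compare} to transfer the right-equidistant property, and apply the induction hypothesis. Your explicit verification that the projections are pairwise distinct is a detail the paper skips (it simply writes $|A|=|\widehat{A}|$), and your closing remark correctly anticipates that a further ingredient (Lemma~\ref{L3}) is needed to sharpen $3f(n-1)$ down to $f(n)$.
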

    
    \begin{proof}
    It is clear that any subset $A$ of $S$ is right-equidistant (with a natural order induced from $S$). Moreover, if $A$ is an antichain, then Claim~\ref{claim_compare} implies that
    $$\widehat{A} = \{\widehat{\x}: \x \in A\} \subset \R_{\infty}^{n-1}$$
    is also right-equidistant. Thus, $|A| = |\widehat{A}| \le f(n-1)$ by induction.
    \end{proof}
    
    \begin{lemma} \label{L3}
    Any two chains of size $3$ in $S$ share a common element.
    \end{lemma}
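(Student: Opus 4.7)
I will argue by contradiction: assume $C_1=\{a,b,c\}$ and $C_2=\{a',b',c'\}$ are two disjoint chains of size~$3$ in $\p$. Since any two elements of each chain are comparable, Claim~\ref{claim_compare} implies that all pairwise distances within a chain are attained on the last coordinate; after renaming I may assume $a_n<b_n<c_n$ and $a'_n<b'_n<c'_n$.

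The first step is an \emph{apex observation}: within any size-$3$ chain $\{x,y,z\}$ with $x_n<y_n<z_n$, the middle element~$y$ is the earliest of the three in the sequence, and $y_n-x_n=z_n-y_n$. Indeed, right-equidistance forces the earliest element of any triple to have equal distances to the other two; among the three chain-distances $y_n-x_n$, $z_n-y_n$, and $z_n-x_n$ only the first two can possibly coincide, so the earliest must be~$y$. Applying this to both chains identifies $b$ and $b'$ as the apices. WLOG $b$ precedes $b'$ in the sequence; then $b$ also precedes $a,c$ (by the apex property of $C_1$) and $a',c'$ (since $b'$ does), so $b$ is the earliest of all six points.

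Translating so that $b=0$, right-equidistance at~$b$ produces a common value $d>0$ with $\|x\|_\infty=d$ for every $x\in\{a,c,a',b',c'\}$. The apex observation additionally pins down $a_n=-d$, $c_n=d$ with $\|\widehat a\|_\infty<d$ and $\|\widehat c\|_\infty<d$, as well as $a'_n=b'_n-D'$ and $c'_n=b'_n+D'$ for some $D'>0$. The main step is a case analysis on who is the second-earliest of the six. If it belongs to $C_1$, WLOG it is $a$, and right-equidistance at $a$ yields $\|a-b'\|_\infty=\|a-c\|_\infty=2d$. The strict bound $\|\widehat a\|_\infty<d$ together with $\|\widehat{b'}\|_\infty\le d$ prevents the maximum in $\|a-b'\|_\infty$ from being attained on the first $n-1$ coordinates, so $|b'_n+d|=2d$; combined with $|b'_n|\le d$ this forces $b'_n=d$ and hence $c'_n=d+D'>d$, contradicting $\|c'\|_\infty=d$.

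Therefore the second-earliest must be~$b'$. A short squeeze using $\|b'-a\|_\infty=\|b'-c\|_\infty=D'$ and $|b'_n|\le d$ then forces $b'_n=0$ and $D'=d$, so $a'_n=-d$ and $c'_n=d$. The third-earliest lies in $\{a,c,a',c'\}$; by the $a\leftrightarrow c$, $a'\leftrightarrow c'$, and $C_1\leftrightarrow C_2$ symmetries I may assume it is~$a$. Right-equidistance at~$a$ then requires $\|a-a'\|_\infty=\|a-c\|_\infty=2d$, but $a_n=a'_n=-d$ reduces the left side to $\|\widehat a-\widehat{a'}\|_\infty\le \|\widehat a\|_\infty+\|\widehat{a'}\|_\infty<d+d=2d$, where the final inequality is strict because $\|\widehat a\|_\infty<d$ is strict. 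This contradiction completes the plan. The main obstacle is the bookkeeping of who occurs when in the sequence, together with deploying the strict inequality~$<$ in the definition~\eqref{eq_poset} of~$\preceq$ at just the right moments; it is precisely the strictness $\|\widehat a\|_\infty<d$ rather than $\le d$ that delivers both final contradictions.
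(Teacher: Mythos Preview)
Your proof is correct. The apex observation, the translation to $b=0$, and the decisive use of the \emph{strict} inequality in the definition of $\preceq$ are exactly the ingredients the paper uses as well. The one place where you are a bit terse is the ``short squeeze'' in Case~2: the two equalities $\|b'-a\|_\infty=\|b'-c\|_\infty=D'$ together with $|b'_n|\le d$ by themselves only give $D'\ge d$ and $|b'_n|\le D'-d$; to finish the squeeze you also need $|a'_n|\le d$ and $|c'_n|\le d$ (coming from $\|a'\|_\infty=\|c'\|_\infty=d$), which then force $D'\le d$ and hence $b'_n=0$. All of these constraints are already in your setup, so this is a matter of exposition rather than a genuine gap.

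Where your argument differs from the paper's is in the amount of case analysis. After identifying the apices, the paper simply assumes (by the same symmetries you invoke) that $a$ is the earliest among the four non-apex points, and then proves in one stroke that \emph{every} point $\V$ occurring later than $a$ must satisfy $v_n=c_n$: the computation is
\[
\|\widehat{\V}-\widehat a\|_\infty \le \|\widehat{\V}-\widehat b\|_\infty + \|\widehat b-\widehat a\|_\infty \le d + \|\widehat b-\widehat a\|_\infty < 2d,
\]
forcing $|v_n-a_n|=2d$, and then $\|\V-b\|_\infty=d$ rules out $v_n=a_n-2d$. Applying this single claim to both $a'$ and $c'$ yields $a'_n=c'_n$, an immediate contradiction to $a'\prec c'$. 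This bypasses your split into ``second-earliest is $a$'' versus ``second-earliest is $b'$'' and the subsequent third-earliest analysis. Your route reaches the same destination with the same mechanism; the paper's version just packages the triangle-inequality step as a statement about all later points at once, which makes the argument noticeably shorter.
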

    
    \begin{proof}
    Assume the contrary. Let $\y^{(1)} \preceq \y^{(2)} \preceq \y^{(3)}$ and $\z^{(1)} \preceq \z^{(2)} \preceq \z^{(3)}$ be two chains with all six of their elements being distinct. It follows that $y^{(1)}_n < y^{(2)}_n < y^{(3)}_n$, and $\big\|\y^{(i)} - \y^{(j)}\big\|_{\infty} = y^{(j)}_n - y^{(i)}_n$ for all $1\leq i < j\leq 3$. Similar relations hold for $\z^{(i)}$.

    Note that $\y^{(2)}$ occurs in the sequence earlier than both $\y^{(1)}$ and $\y^{(3)}$. Indeed, $\y^{(1)}$ cannot occur the first from its chain, since $\big\|\y^{(2)} - \y^{(1)}\big\|_{\infty} < \big\|\y^{(3)} - \y^{(1)}\big\|_{\infty}$ by Claim~\ref{claim_compare}. The same holds for $\y^{(3)}$. Similarly, $\z^{(2)}$ occurs earlier than both $\z^{(1)}$ and $\z^{(3)}$.
    
    Without loss of generality\footnote{Indeed, $\y$ and $\z$ are interchangeable, and if the earliest of these points has upper index $3$, then we can replace every point $\x$ of the configuration by $-\x$, thus reverting the $\preceq$ relations in the chains.}, assume that $\y^{(1)}$ occurs earlier than $\y^{(3)}$, $\z^{(1)}$, and $\z^{(3)}$. We claim that for any point $\V \in S$ that occurs later than $\y^{(1)}$ (and, therefore, later than $\y^{(2)}$), the equality $v_n = y^{(3)}_n$ holds.
    
    To show this, denote $d  \coloneqq \big\|\y^{(1)} - \y^{(2)}\big\|_{\infty} = y^{(2)}_n-y^{(1)}_n$. Note that $\big\|\y^{(3)} - \y^{(1)}\big\|_{\infty} = y^{(3)}_n-y^{(1)}_n = 2d$. Hence $\big\|\V - \y^{(1)}\|_{\infty} = 2d$, since $S$ is right-equidistant. Similarly, $\big\|\V - \y^{(2)}\big\|_{\infty} = \big\|\y^{(1)} - \y^{(2)}\big\|_{\infty} = d$. At the same time,
    \begin{equation*}
    	\big\|\widehat{\V} - \widehat{\y}^{(1)}\big\|_{\infty} \leq \big\|\widehat{\V} - \widehat{\y}^{(2)} \big\|_{\infty} + \big\|\widehat{\y}^{(2)} - \widehat{\y}^{(1)}\big\|_{\infty} \leq d + \big\|\widehat{\y}^{(2)} - \widehat{\y}^{(1)}\big\|_{\infty} < 2d,
    \end{equation*}
    where the last strict inequality is from $\y^{(1)} \preceq \y^{(2)}$ by definition\footnote{This is the only place in the present paper where we rely on the fact that the inequality in \eqref{eq_poset} is strict.}. Therefore,
    $$2d = \big\|\V - \y^{(1)}\big\|_{\infty} = \big|v_n-y^{(1)}_n\big|.$$
    
    If $v_n = y^{(1)}_n - 2d$, then $y^{(2)}_n - v_n = 3d$, which contradicts the equality $\left\|\V - \y^{(2)}\right\|_{\infty} = d$. Hence, we get the desired equality
    $$v_n = y^{(1)}_n + 2d = y^{(3)}_n.$$
    
    Applying this to $\z^{(1)}$ and $\z^{(3)}$, we conclude that $z^{(1)}_n = z^{(3)}_n = y^{(3)}_n$, which contradicts the fact that $\z^{(1)} \preceq\z^{(3)}$.
    \end{proof}
    
    Consider any decomposition of $S$ into the smallest possible number of disjoint chains. Dilworth's theorem states that this number equals the width of the poset, which does not exceed $f(n-1)$ by Lemma~\ref{L2}. Besides, Lemmas~\ref{L1} and~\ref{L3} imply that there may be at most one chain of size $3$ in the decomposition, while all other chains should have sizes at most $2$. Thus,
    $$m = |S| \leq 3+2(f(n-1)-1) = f(n),$$
    justifying the  induction step.

\subsection{Proof of the lower bound in Theorem~\ref{T_right_infty}}
    Let $\mathcal{S}$ be the set of all non-empty subsets of $[n]$. More formally, put $$\mathcal{S} = \{0, 1\}^{[n]}\setminus\{\varnothing\}.$$ Fix an ordering $(S_1, S_2, \ldots, S_{2^n-1})$ on $\mathcal{S}$ with the following property: any set occurs in this ordering after all its supersets~--- for example, one can sort these sets in decreasing order of their sizes, or in decreasing order of the numbers they represent when written in binary. For all $i\in[2^n-1]$, let $\V^{(i)}$ be the indicator vector of $S_i$, that is, for all $k\in [n]$, $v^{(i)}_k = 1$  if $k\in S_i$ and $0$ otherwise. 
    
    For all $i\in[2^n-1]$, define $\x^{(2i-1)} \coloneqq 2^{-i}\V^{(i)}$ and $\x^{(2i)} \coloneqq 2^{1-i}\V^{(i)}$. Finally, we put $\x^{(2^{n+1}-1)} \coloneqq \boldsymbol{0} \in \R^n$. We state that the sequence $\left(\x^{(i)}\right)_{i=1}^{2^{n+1}-1}$ is right-equidistant in $\mathbb{R}^n_{\infty}$.
    
    Indeed, let $c_i$ be equal to $\big\|\x^{(i)}\big\|_{\infty}$~--- that is, $0$ for $i = 2^{n+1}-1$, $2^{-(i+1)/2}$ for all other odd $i$, and $2^{1-i/2}$ for all even $i$. We claim that $\big\|\x^{(j)}-\x^{(i)}\big\|_{\infty} = c_i$ for all $1 \le i < j \le 2^{n+1}-1$. To show this, we consider three cases.
    
    First, if $\lceil i/2 \rceil = \lceil j/2 \rceil$, then $\x^{(j)} = 2\x^{(i)}$, and
    $$\big\|\x^{(j)} - \x^{(i)}\big\|_{\infty} = \big\|2\x^{(i)} - \x^{(i)}\big\|_{\infty} = \big\|\x^{(i)}\big\|_{\infty} = c_i.$$
    
    Second, if $\lceil i/2 \rceil < \lceil j/2 \rceil < 2^n$, then
    $$\big\|\x^{(j)} - \x^{(i)}\big\|_{\infty} \leq \max\big\{|z - y|\,\colon\,y\in\{0, c_i\},\,z\in\{0, c_j\}\big\} = \max\{c_i, c_j\} = c_i.$$
    On the other hand, since $S_{\lceil j/2 \rceil}$ is not a superset of $S_{\lceil i/2 \rceil}$, there is a $k\in[n]$ such that $k\in S_{\lceil i/2 \rceil}\!\setminus\! S_{\lceil j/2 \rceil}$. Hence
    $$\big\|\x^{(j)} - \x^{(i)}\big\|_{\infty}\geq \big|x^{(j)}_k - x^{(i)}_k\big| = \big|0-c_i\big| = c_i.$$

    Finally, if $j = 2^{n+1} - 1$, then $\big\|\x^{(i)} - \x^{(j)}\big\|_{\infty} = \big\|\x^{(i)} - \mathbf{0}\big\|_{\infty} = c_i$,
    which completes the proof.

\subsection{Proof of Theorem~\ref{T_right_1}}
	
	Let $(\V^{(1)}, \ldots, \V^{(2n)})$ be the set of all vertices of the standard unit $n-$dimensional cross-polytope translated in such a way that $\V^{(2n)} = {\bf 0}$ with a specific ordering.	Namely, put $\V^{(2i-1)}:=\e_1+\e_{i}$ and $\V^{(2i)}:=\e_1-\e_{n+1-i}$ for all $i \in [n]$, where $\e_i$ stands for the $i$'th standard basis vector.
    
    Then, as in the previous proof, for all $i\in[2n-1]$, define $\x^{(2i-1)} \coloneqq 2^{-i}\V^{(i)}$, and $\x^{(2i)} \coloneqq 2^{1-i}\V^{(i)}$. Finally, put $\x^{(4n - 1)} \coloneqq \V^{(2n)} = \boldsymbol{0}$. We claim that the sequence $\left(\x^{(i)}\right)_{i=1}^{4n-1}$ is right-equidistant in $\mathbb{R}^n_1$.
    
    Indeed, put $c_i:=\big\|\x^{(i)}\big\|_1$. We claim that $\big\|\x^{(j)}- \x^{(i)}\big\|_1 = c_i$ for all $1 \le i < j \leq 4n-1$. To show this, we consider two cases.
    
    First, if $\lceil i/2 \rceil = \lceil j/2 \rceil$, then $\x^{(j)} = 2\x^{(i)}$, and
    $$\big\|\x^{(j)} - \x^{(i)}\big\|_1 = \big\|2\x^{(i)} - \x^{(i)}\big\|_1 = \big\|\x^{(i)}\big\|_1 = c_i.$$
    
    Second, assume that $\lceil i/2 \rceil < \lceil j/2 \rceil$. Note that if the vectors $\x^{(i)}$ and $\x^{(j)}$ have a common nonzero position $k$, $k\in [2,n]$, then $|x^{(i)}_k-x^{(j)}_k| = |x^{(i)}_k|+|x^{(j)}_k|$. The same obviously holds in case one of $x^{(i)}_k,x^{(j)}_k$ is $0$. Using this, we get
    \begin{align*}
    \big\|\x^{(j)} - \x^{(i)}\big\|_1 & = \sum_{k=1}^n\big|x^{(j)}_k - x^{(i)}_k\big| \\
    & = x^{(i)}_1 - x^{(j)}_1 + \sum_{k=2}^n\left(\big|x^{(j)}_k\big| + \big|x^{(i)}_k\big|\right) \\
    & = x^{(i)}_1 - x^{(j)}_1 + x^{(j)}_1 + \sum_{k=2}^n\big|x^{(i)}_k\big| = c_i.
    \end{align*}
In the last two equalities we used that $\x^{(j)}$ has only two nonzero coordinates and that they are equal in the absolute value.  This completes the proof.

\vspace*{5mm}

The similarity between the last two proofs raises the following question.

\begin{question}
Is it true that for any $n-$dimensional normed space $\R_N^n$, the size of any right-equidistant sequence in it is at most $2e(\R_N^n) - 1$?
\end{question}

\subsubsection*{Acknowledgements}

We would like to thank Ilya Bogdanov for pointing out a way to simplify our original proof of Theorem~\ref{T_right_infty}.


\begin{thebibliography}{9}

\bibitem{AlPud} N. Alon, P. Pudl\'ak, {\it Equilateral Sets in $l_p^n$}, Geom. Funct. Anal., 13 (2003), N3, 467--482.

\bibitem{Shelah} H. Ardal, J. Ma\v nuch, M. Rosenfeld, S. Shelah, L. Stacho, {\it The Odd-Distance Plane Graph}, Discrete Comput. Geom., 42 (2009), 132--141.

\bibitem{BCL} H.-J. Bandelt, V. Chepoi, M. Laurent, {\it Embedding into rectilinear spaces}, Discrete Comput. Geom., 19 (1998), N4, 595--604.

\bibitem{BloWil} A. Blokhuis, H.A. Wilbrink, {\it Alternative proof of Sine’s theorem on the size of a regular polygon in $\R^n$ with the $\ell_\infty-$metric}, Discrete Comput. Geom., 7 (1992), N4, 433--434.

\bibitem{Dilworth} R.P. Dilworth, {\it A decomposition theorem for partially ordered sets}, Ann. of Math., 51 (1950), N2, 161--166.

\bibitem{ErRog1962} P. Erd\H{o}s, C.A. Rogers, {\it Covering space with convex bodies}, Acta Arith., 7 (1962), N3, 281--285.

\bibitem{FPS} J. Fox, J. Pach, A. Suk, {\it Bounded VC-dimension implies the Schur-Erd\H{o}s conjecture}, Combinatorica, 41 (2021), N6, 803--813.

\bibitem{FKS} N. Frankl, A. Kupavskii, A. Sagdeev. {\it Max-norm Ramsey Theory}, arXiv preprint 2111.08949, 2021.

\bibitem{GRS} R.L. Graham, B.L. Rothschild, E.G. Straus, {\it Are there $n+2$ points in $E^n$ with odd integral distances?} Amer. Math. Monthly, 81 (1974), N1, 21--25.

\bibitem{Guy} R. Guy, editor, {\it Unsolved Problems: An Olla-Podrida of Open Problems, Often Oddly
Posed}, Amer. Math. Monthly, 90 (1983), N3, 196--200.

\bibitem{KMS} J. Koolen, M. Laurent, A. Schrijver, {\it Equilateral dimension of the rectilinear space},
Des. Codes Cryptogr., 21 (2000), N1, 149--164.

\bibitem{nps17} M. Nasz\'{o}di, J. Pach, K. Swanepoel, {\it Arrangements of homothets of a convex body}, Mathematika, 63 (2017), N2, 696--710.

\bibitem{ns17_2} M. Nasz\'{o}di, K. Swanepoel, {\it Arrangements of homothets of a convex body II},
Contrib. Discrete Math., 13 (2018), 116--123.

\bibitem{Pet} C.M. Petty, {\it Equilateral sets in Minkowski spaces}, Proc. Amer. Math. Soc., 29 (1971), N2, 369--374.

\bibitem{pol17} A. Polyanskii, {\it Pairwise intersecting homothets of a convex body}, Discrete Math., 340 (2017), N8, 1950--1956.

\bibitem{Smyth} C. Smyth, {\it Equilateral sets in $l_p^d$}, Thirty Essays on Geometric Graph Theory, ed. J. Pach, Springer, New York (2013), 483--488.

\bibitem{Swan99} K.J. Swanepoel, {\it Cardinalities of $k$-distance sets in Minkowski spaces}, Discrete Mathematics, 197 (1999), 759--767.

\bibitem{Swan04} K.J. Swanepoel, {\it A problem of Kusner on equilateral sets}, Arch. Math., 83 (2004), N2, 164--170.

\bibitem{SwanVil} K.J. Swanepoel, R. Villa, {\it Maximal equilateral sets}, Discrete Comput. Geom., 50 (2013), N2, 354--373.

\end{thebibliography}
\end{document}